\newtheorem{theorem}{Theorem}
\newtheorem{lemma}[theorem]{Lemma}
\newtheorem{observation}[theorem]{Observation}
\newcommand{\noin}{\noindent}
\newenvironment{proof}{{\noin \bf Proof}: }{\newqed \medskip}
\newcommand{\newqed}{\null\hfill$\Box$\par\medskip}
\newcommand{\ipoly}[2]{I(#1,#2)}
\title{Optimal Graphs for Independence and $k$-Independence Polynomials}
\author{J.I. Brown$^1$, D. Cox $^2$}
\begin{document}

\maketitle

\begin{small}
\begin{center}
	$^1$ Department of Mathematics and Statistics, Dalhousie University,\\ Halifax, Canada B3H 4R2, Jason.Brown@dal.ca\\
	$^2$ Department of Mathematics, Mount Saint Vincent University,\\ Halifax, Canada B3M 2J6, danielle.cox@msvu.ca\\
	October 4, 2017
\end{center}
\end{small}


\begin{abstract}
The {\em independence polynomial} $I(G,x)$ of a finite graph $G$ is the generating function for the sequence of the number of independent sets of each cardinality. 
We investigate whether, given a fixed number of vertices and edges, there exists {\em optimally-least} ({\em optimally-greatest}) graphs, that are least (respectively, greatest) for {\em all} non-negative $x$. 
Moreover, we broaden our scope to $k$-independence polynomials, which are generating functions for the $k$-clique-free subsets of vertices. 
For $k \geq 3$, the results can be quite different from the $k = 2$ (i.e. independence) case.
\end{abstract}

\section{Introduction}

Given a property of subsets of the vertex or edge set -- such as independent, complete, dominating for vertices and matching for edges -- one is often interested in maximizing or minimizing the size of the set in a given graph $G$. However, one can get a much more nuanced study of the subsets by studying the number of such sets of each cardinality in $G$, and in this guise, one often encapsulates the sequence by forming a generating function. Independence, clique, dominating and matching polynomials have all arisen and been studied in this setting.

In all cases, the generating polynomial $f(G,x)$ is naturally a function on the domain $[0,\infty)$. 
If the number of vertices $n$ and edges $m$ are fixed, one can ask whether there exists an extremal graph. Let $\mathcal{S}_{n,m}$ denote the set of simple graphs of order $n$ ($n$ vertices) and size $m$ ($m$ edges). Let $H\in \mathcal{S}_{n,m}$. $H$ is {\em optimally-greatest} (or {\em optimally-least}) if $f(H,x) \geq f(G,x)$ ($f(H,x) \leq f(G,x)$, respectively) for all graphs $G\in \mathcal{S}_{n,m}$ and {\em all} $x \geq 0$ (for any particular value of $x \geq 0$, of course, there is such a graph $H$, as the number of graphs of order $n$ and size $m$ is finite, but we are interested in {\em uniformly} optimal graphs). 

Such questions (related to simple substitutions of generating polynomials) have attracted considerable attention in the areas of network reliability \cite{ath,suffel,boesch91,browncox,gross,myrvold} and chromatic polynomials \cite{sakaloglu,chromatic}. 

Here we consider optimality for independence polynomials
\[ I(G,x) = \sum_{j} i_{j}x^{j},\]
and a broad generalization, for fixed $k \geq 2$, to $k$-independence polynomials
\[ I_k(G,x) = \sum_{j} i_{k,j}x^{j},\] 
where $i_{k,j}$ is the number of subsets of the vertex set of size $j$ that induce a $k$-clique-free subgraph (that is, the induced subgraph contains no complete subgraph of order $k$). Clearly independence polynomials are precisely the $2$-independence polynomials, while the $3$-independence polynomials are generating functions for the numbers of triangle-free induced subgraphs.

In this paper, we look at the optimality of independence polynomials and more generally $k$-independence polynomials.
In the case of the former, optimally-greatest graphs always exist for independence polynomials, but we do not know whether optimally-least graphs necessarily exist for independence polynomials as well (although we will prove for some $n$ and $m$ they do). In contrast, we shall show that for $k\geq 3$, optimality behaves quite differently, in that for some $n$ and $m$, optimally-least and optimally-greatest graphs for $k$-independence polynomials do not exist.

\section{Optimally for Independence Polynomials}

\subsection{Optimally-Greatest Graphs for Independence Polynomials}

Our first result shows that for independence polynomials, optimally-greatest graphs indeed do always exist.

\begin{theorem}
For all $n \geq 1$ and all $m \in \{0,\ldots,{n \choose 2}\}$, an optimally-greatest graph always exists. 
\end{theorem}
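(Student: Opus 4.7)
The plan is to establish the stronger coefficient-wise statement: find $H \in \mathcal{S}_{n,m}$ with $i_j(H) \geq i_j(G)$ for every index $j$ and every $G \in \mathcal{S}_{n,m}$, where $i_j(G)$ is the number of independent sets of size $j$ in $G$. Since $I(G,x) = \sum_j i_j(G) x^j$ has non-negative coefficients and we only care about $x \geq 0$, such term-by-term dominance immediately yields $I(H,x) \geq I(G,x)$ on all of $[0,\infty)$, which is (strictly more than) what is required.

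To produce the candidate $H$, I would pass to complements. An independent set in $G$ is a clique in $\bar{G}$, so $i_j(G) = c_j(\bar{G})$, where $c_j$ counts $j$-cliques. Complementation is a bijection from $\mathcal{S}_{n,m}$ onto $\mathcal{S}_{n,\binom{n}{2}-m}$, so the task becomes: produce an $n$-vertex graph with $\binom{n}{2}-m$ edges whose clique counts $c_j$ simultaneously dominate those of every other graph of the same order and size. Let $L$ be the colex graph, obtained by placing on $\{1,\ldots,n\}$ the first $\binom{n}{2}-m$ pairs in the colexicographic order $\{1,2\} < \{1,3\} < \{2,3\} < \{1,4\} < \cdots$, and take $H := \bar{L}$.

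The main step, and really the only content, is verifying that $c_j(L) \geq c_j(F)$ for every $j$ and every $F \in \mathcal{S}_{n,\binom{n}{2}-m}$. This is a classical corollary of the Kruskal--Katona theorem: view $E(F)$ as a $2$-uniform family and iterate the shadow (or upper-shadow) inequality up through the $j$-clique levels, with equality realized by the colex initial segment. If a self-contained argument is preferred, this can be replaced by a direct compression step: fix a vertex order $v_1,\ldots,v_n$ and define a shift on an edge $\{v_i,v_j\}$ with $i<j$ that, when possible, replaces it by $\{v_k,v_j\}$ for some $k<i$ with $v_kv_j\notin E$; a short case analysis on the pairs of edges affected shows that no $c_j$ decreases under a shift, and since each shift strictly decreases a fixed potential function (for instance, the sum of colex ranks of the edges), iteration terminates at $L$. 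Either route---the external appeal to Kruskal--Katona or the explicit shifting---is the hard part of the proof, and both deliver the same extremal graph $H = \bar{L}$.
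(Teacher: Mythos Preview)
Your proof is correct and follows essentially the same strategy as the paper: reduce to coefficient-wise dominance and then invoke a Kruskal--Katona type extremal result to identify the lex/colex threshold graph as the maximizer of every $i_j$ simultaneously. The only cosmetic difference is that the paper works directly with independent sets (citing \cite{cutler}, and alternatively recasting Kruskal--Katona algebraically via Gotzmann ideals in the Kruskal--Katona ring), whereas you pass to the complement and maximize clique counts with the classical Kruskal--Katona theorem; the resulting extremal graph $\bar L$ is isomorphic to the paper's $G_{n,m,\preceq}$.
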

\begin{proof}
A key observation is that a sufficient condition for $H$ to be optimally-greatest (or optimally-least) for independence polynomials is that $i(H,x) = \sum i_{j}(H) x^{j}$ is {\em coefficient-wise greatest}, that is, for all other graphs $G$ of the same order and size, the generating polynomials $f(G,x) = \sum i_{j}(G) x^{j}$ satisfies $i_{j}(H) \geq i_{j}(G)$ for all $j$ (respectively, $i_{j}(H) \leq i_{j}(G)$). (The coefficient-wise condition is not, in general, necessary for optimality as, for example, $5x^{2}+x+5 \geq x^{2}+4x+1$ for $x \geq 0$, but clearly $5x^{2}+x+5$ is not coefficient-wise greater than or equal to $x^{2}+4x+1$.)

Consider the following graph construction. For a given $n$ and $m$, take a fixed linear order $\preceq$ of the vertices, $v_{n} \preceq v_{n-1} \preceq \cdots \preceq v_{1}$, and select the $m$ largest edges in lexicographic order. We will denote this graph as $G_{n,m,\preceq}$ (it is dependent on the linear order, but of course all such graphs are isomorphic). It was shown in \cite{cutler} that $G_{n,m,\preceq}$ is the graph of order $n$ and size $m$ with the most number of independent sets of size $j$, for {\em all} $j \geq 0$. 
It follows that the independence polynomial for $G_{n,m,\preceq}$ is coefficient-wise optimally-greatest for independence polynomials, for all graphs of order $n$ and size $m$.
\newqed 
\end{proof}

Moreover, we can compute the independence polynomial of this optimally-greatest graph. To do so, we need the following well-known recursion to compute the independence polynomial of a graph.
Let $G$ be a graph with $v\in V(G)$. Then
\begin{eqnarray}
I(G,x)=xI(G-N[v],x)+I(G-v,x) \label{ipoly}
\end{eqnarray}
where $N[v]$ is the closed neighbourhood of $v$ and $G-v$ is $G$ with $v$ removed.
For graph $G_{n,m,\preceq}$ with $m < {{n} \choose {2}}$, write $m=(n-\ell)+(n-\ell+1)+\ldots (n-1)+j$ with $j \in \{0,\ldots,n-\ell-2\}$. Since the edges are added in lexicographic order, there will be $k$ vertices that have degree $n-1$ and one vertex of degree $j+\ell$. The remaining $n-\ell-1$ vertices form an independent set. Let $v$ be the vertex of degree $j+\ell$. 
Using vertex, $v$ and recursion (\ref{ipoly}), we obtain our result,
\begin{eqnarray*}
 I(G_{n,m,\preceq},x)& = &I(G_{n,m,\preceq}-v,x)+xI(G_{n,m,\preceq}-N[v],x)\\
  & = & I(K_{\ell} \cup \overline{K_{n-\ell-1}},x) + I(K_{\ell} \cup \overline{K_{n-\ell-j-1}},x)\\
   & = & (1+\ell x)(1+x)^{n-\ell-1} + x(1+\ell x)(1+x)^{n-\ell-j-1}\\
   & = & (1+\ell x)(1+x)^{n-\ell-j-1} \left( (1+x)^{j} + x\right).
 \end{eqnarray*}
Of course, for $m = {n \choose 2}$, $G_{n,m,\preceq} = K_{n}$ and so $I(G_{n,m,\preceq},x) = 1+nx$.

\vspace{0.25in}

It is instructive that this result can also be proved purely algebraically, and we devote the remainder of this section to do so. (We refer the reader \cite{brownbook}
for an introduction to complexes and their connection to commutative algebra.) 
The {\em independence complex} of graph $G$ of order $n$ and size $m$, $\Delta_{2}(G)$, has as its faces the independent sets of $G$ (these are obviously closed under containment, and hence a complex). The {\em $f$-vector} of $\Delta_{2}(G)$ is $(1,n,{{n} \choose {2}} - m,f_{3},\ldots,f_{\beta})$, where $f_{i}$ is the number of faces of cardinality $i$ in the complex (and $\beta$ is the independence number of $G$, which is the same as the dimension of the complex). We will show that we can maximize the independence polynomial on $[0,\infty)$ by maximizing ({\em simultaneously}, for some graph $G$) all of the $f_{i}$'s, via an excursion into commutative algebra. 

We begin with some definitions. Fix a field ${\mathbf k}$. Let $A$ be a {\em ${\mathbf k}$-graded algebra}, that is, $A$ is a commutative ring containing ${\mathbf k}$ as a subring, that can be written as a vector space direct sum $\displaystyle{A = \bigoplus_{d\geq 0} A_{d}}$, over ${\mathbf k}$, with the property that $A_{i}A_{j} \subseteq A_{i+j}$ for all $i$ and $j$ (we call elements in some $A_{i}$ {\em homogeneous}, and $A_{i}$ is called the {\em $d$-th graded component} of $A$). The graded ${\mathbf k}$-algebra $A$ is {\em standard} if it is generated (as a ring) by a finite set of elements in $A_{1}$. Our prototypical example of a standard ${\mathbf k}$-graded algebra is the polynomial ring ${\mathbf k}[x_{1},x_{2},\ldots,x_{n}]$ in variables $x_{1},x_{2},\ldots,x_{n}$. Note that for any standard graded ${\mathbf k}$-algebra $\displaystyle{A = \bigoplus_{d\geq 0} A_{d}}$ that is a quotient of a polynomial ring by a homogenous ideal, a ${\mathbf k}$-basis for $A_{d}$ is simply the monomials in $A_{d}$.

The {\em Stanley-Reisner complex} of an ideal $I$ of a standard graded ${\mathbf k}$-algebra $A$ (generated by $x_{1},x_{2},\ldots,x_{n}$ in $A_{1}$) is the (simplicial) complex whose faces are the square-free monomials in $x_{1},x_{2},\ldots,x_{n}$ not in $I$ (the properties of being an ideal ensures that this set is closed under containment). 
Let $I$ be an ideal of ${\mathbf k}$-algebra $A$; $I$ is {\em homogeneous} if it is generated by homogeneous elements of $A$. We write $\displaystyle{I = \bigoplus_{d\geq 0} I_{d}}$, where $I_{d} = A_{d} \cap I$ is the {\em $d$-th graded component of $I$} (it is a ${\mathbf k}$-subspace of $I$). For a homogeneous ideal $I$ of $A$, a square-free monomial $M$ of degree $d$ in $x_{1},\ldots,x_{n}$ belongs to exactly one of $I_{d}$ and the Stanley-Reisner complex of $I$ (where we identify a face of the complex with the product of its elements). As the total number of monomial of $Q$ of degree $d$ is fixed, we see that maximizing the number of faces of size $d$ in the Stanley-Reisner complex of $I$ corresponds to minimizing the number of monomials of degree $d$ in $I$. 

The {\em Hilbert function} of the homogeneous ideal $I$ is the function $H_{I}:{\mathbb N} \rightarrow {\mathbb N}$, where $H_{I}(d) = \mbox{dim}_{\mathbf k}I_d$. We call $I$ {\em Gotzmann} (see \cite{hoefelpaper}) if for all other homogeneous ideals $J$ of $A$ and all $d \geq 0$, if $H_{I}(d)=H_{J}(d)$ then $H_{I}(d+1)\leq H_{J}(d+1)$. 
For an ideal of $Q$ which is Gotzmann, its Hilbert function is smallest for each value of $d \in {\mathbb N}$. 



We will now focus in on a standard graded ${\mathbf k}$-algebra related to independence in graphs. Fix $n \geq 1$. The  {\em Kruskal-Katona ring}, $Q = {\mathbf k}[x_{1},\ldots,x_{n}]/\langle x_{1}^{2},\ldots,x_{n}^{2}\rangle$, is generated by the square-free monomials; it is clearly a standard graded $k$-algebra. Let $G$ be a graph on vertices $\{x_1,x_2,\ldots x_n\}$. The {\em edge ideal} $I_{G}$ is the ideal of $Q$ generated by $\{x_ix_j \mid x_ix_j \in E(G) \}$. If a (square-free) monomial of $Q$ is not in $I_G$ then that set of vertices cannot contain an edge in $G$, so it is an independent set (and vice versa). This means that the Stanely-Reisner complex of our edge ideal $I_{G}$ in the Kruskal-Katona ring is precisely the independence complex $\Delta_{2}(G)$ of our graph $G$. If we can show that the edge ideal $I_{G}$ is Gotzmann in $Q$, then this means that for each $d \geq 0$, $I_{G}$ contains the fewest monomials of degree $d$ for all $d$ compared to {\em any} other such edge ideal. Hence by a previous observation, the $f$-vector of the independence complex of $G$ will have the largest entries component-wise compared to any other graph of order $n$ and size $m$. Thus our graph will have an independence polynomial that is optimally-greatest.

Let $I$ be an ideal in a standard graded ${\mathbf k}$-algebra $A$, generated by $x_{1},\ldots,x_{n} \in A_{1}$. Then $I$ is a { \em lexicographic ideal} if for any monomials $u$ and $v$ in $x_{1},\ldots,x_{n}$, whenever $v \in I$ and $u$ is lexicographically bigger than $v$, we have $u \in I$ as well.
It is known that lexicographic ideals are Gotzmann in the Kruskal-Katona rings \cite{macaulaylex}. As the edges of our family of graphs $G_{n,m,\preceq}$ are added in lexicographic order, it follows that the edge ideal of $I_G$ in $Q$ is lexicographic, and hence Gotzmann. Therefore the $f$-vector is maximized, given $f_0, f_1, f_2$, that is, given, $n$ and $m$, and so $G_{n,m,\preceq}$ is optimally-greatest.

\vspace{0.25in}

\subsection{Optimally-Least Graphs for Independence Polynomials}

We now turn our attention to the existence of optimally-least graphs for the independence polynomial, and we find here that we can only prove the existence of optimally-least graphs for certain values of $m = m(n)$ (and we do not know if there are values of $n$ and $m$ for which optimally-least graphs do not exist). 

We begin with dense graphs. It is obvious that for a graph $G$ of order $n$ and size $m$ (which has  ${n \choose 2}-m$ many independent sets of cardinality $2$) that the independence polynomial of such a graph $G$ is, for $x \geq 0$, at least 
\[ 1+nx+\left( {n \choose 2}-m \right)x^2,\]
and by a previous observation, if there is a graph with this independence polynomial, then it is the optimally-least. 

Turan's famous theorem states (see, for example \cite{thebook}) that the maximum number of edges of a graph with no triangles is $\lceil \frac{n}{2} \rceil \lfloor \frac{n}{2} \rfloor = \lfloor \frac{n^{2}}{4} \rfloor$, with equality iff $G$ is a complete bipartite graph with sides of equal or nearly equal cardinality. It follows, by taking complements, that provided 
\[ m \geq {{n} \choose {2}} - \Bigl \lfloor \frac{n^{2}}{4} \Bigr \rfloor \]
then the graph formed by adding any $m - ({{n} \choose {2}} - \lfloor \frac{n^{2}}{4} \rfloor)$ edges to the complement of $K_{ \lceil \frac{n}{2} \rceil, \lfloor \frac{n}{2} \rfloor}$, namely $\overline{K_{ \lceil \frac{n}{2} \rceil, \lfloor \frac{n}{2} \rfloor}} ~=~ K_{\lceil \frac{n}{2} \rceil} \cup K_{\lfloor \frac{n}{2} \rfloor}$ is the optimally-least graph. This gives the following result:

\begin{theorem}
For a given $n\geq 2$ and $m\geq {{n} \choose {2}} - \lfloor \frac{n^{2}}{4} \rfloor$, the graph with the optimally-least independence polynomial is formed from $K_{\lceil \frac{n}{2} \rceil} \cup K_{\lfloor \frac{n}{2} \rfloor}$ by adding in any $m - ({{n} \choose {2}} - \lfloor \frac{n^{2}}{4} \rfloor)$ edges. The independence polynomial of such a graph is 
\[ 1+nx+\left( {n \choose 2}-m \right)x^2.\] 
\newqed
\end{theorem}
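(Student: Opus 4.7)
The plan is to combine the coefficient-wise sufficient condition established earlier in this section with Tur\'an's theorem (which is already quoted in the lead-in to the theorem). Every graph $G \in \mathcal{S}_{n,m}$ automatically has $i_0(G)=1$, $i_1(G)=n$, and $i_2(G)=\binom{n}{2}-m$ (the non-edges of $G$), so its independence polynomial is coefficient-wise at least $1+nx+(\binom{n}{2}-m)x^2$, with equality if and only if $G$ has no independent set of size $3$ or larger. It therefore suffices to exhibit one member of $\mathcal{S}_{n,m}$ whose independence number is at most $2$; by the sufficient condition, any such graph is optimally-least.

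Next, I would translate the ``no independent triple'' condition via complementation: $G$ has independence number $\leq 2$ iff $\overline{G}$ is triangle-free. Tur\'an's theorem (as stated in the paragraph preceding the theorem) bounds the number of edges of a triangle-free graph on $n$ vertices by $\lfloor n^2/4 \rfloor$, with the extremal graph being the balanced complete bipartite graph $K_{\lceil n/2 \rceil, \lfloor n/2 \rfloor}$. Passing back through the complement gives exactly the hypothesis $m \geq \binom{n}{2} - \lfloor n^2/4 \rfloor$ and pinpoints the ``sparsest triangle-free complement'' candidate.

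For the construction, I would start from $H_0 = \overline{K_{\lceil n/2 \rceil, \lfloor n/2 \rfloor}} = K_{\lceil n/2 \rceil} \cup K_{\lfloor n/2 \rfloor}$, which has independence number $2$ (any independent set picks at most one vertex from each clique) and exactly $\binom{n}{2} - \lfloor n^2/4 \rfloor$ edges. To reach the target size $m$, I would add any $m - (\binom{n}{2} - \lfloor n^2/4 \rfloor)$ additional edges arbitrarily; since adding edges can only destroy, never create, independent sets, the resulting graph still has independence number $\leq 2$. Its independence polynomial therefore equals the universal lower bound $1+nx+(\binom{n}{2}-m)x^2$, completing the argument.

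There is no real obstacle: the whole theorem is essentially a two-line deduction, with the coefficient-wise principle supplying optimality and Tur\'an's theorem supplying both the edge threshold and the explicit construction. The only thing one must be slightly careful about is noting that edges added to $H_0$ do not accidentally create an independent triple, which is immediate since adding edges to a graph can only decrease (or preserve) each $i_j$ for $j \geq 2$.
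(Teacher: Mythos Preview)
Your proposal is correct and follows essentially the same route as the paper: both arguments observe that $1+nx+\bigl(\binom{n}{2}-m\bigr)x^2$ is a coefficient-wise lower bound for any $I(G,x)$ with $G\in\mathcal{S}_{n,m}$, invoke Tur\'an's theorem on the complement to guarantee (for $m\geq \binom{n}{2}-\lfloor n^{2}/4\rfloor$) a graph of independence number at most $2$, and build that graph by adding edges to $K_{\lceil n/2\rceil}\cup K_{\lfloor n/2\rfloor}$. Your write-up is in fact slightly more explicit than the paper's about why added edges cannot raise the independence number, but the argument is the same.
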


We can extend this result by utilizing a result of Lov\'{a}sz and Simonovits \cite{lovsim}, which, answering a conjecture of Erd\"{o}s, showed that for $1 \leq k < n/2$, the ($K_{4}$-free) graph of order $n$ and size $\lfloor n^{2}/4 \rfloor$ with the fewest number of triangles is the graph formed from $K_{ \lceil \frac{n}{2} \rceil, \lfloor \frac{n}{2} \rfloor}$ by adding in edges to a largest cell so that no triangles are formed within the cell. (In such a case, the number of triangles formed in the graph is $k \lfloor n/2 \rfloor$.) As well, a theorem of Fisher and Solow \cite{fishersolow} states that for $a \geq b \geq 1$, the least number of triangles in a $K_{4}$-free graph of order $n=2a+b$ and size $m=2ab+a^{2}$ occurs in $K_{a,a,b}$. By taking complements, we derive:

\begin{theorem}
For a given $n\geq 2$ and $m = {{n} \choose {2}} - \lceil \frac{n}{2} \rceil \lfloor \frac{n}{2} \rfloor - k$, where $1 \leq k \leq \lfloor n/2 \rfloor$, the graph with the optimally-least independence polynomial is formed from $K_{\lceil \frac{n}{2} \rceil} \cup K_{\lfloor \frac{n}{2} \rfloor}$ by deleting any $k$ edges in $K_{\lceil \frac{n}{2} \rceil}$ so that no independent set of size $3$ is formed in that part. The independence polynomial of such a graph is 
\[ 1+nx+\left( {n \choose 2}-m \right)x^2+ k \lfloor n/2 \rfloor x^{3}.\]
As well, for any for $a \geq b \geq 1$, the graph with the optimally-least independence polynomial with order $n=2a+b$ and size $m = a(a-1)+b(b-1)/2$ is $2K_{a} \cup K_{b}$, which has independence polynomial 
\[ 1+nx+\left( {n \choose 2}-m \right)x^2+\left( 2{a \choose 3} + {b \choose 3}\right)x^3.\] \newqed
\end{theorem}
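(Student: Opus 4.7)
The plan is to apply the same strategy used to prove Theorem 3: exhibit a graph $G \in \mathcal{S}_{n,m}$ whose independence polynomial is coefficient-wise least, which suffices for optimal-leastness by the observation made at the start of the section. Since $i_0(G) = 1$, $i_1(G) = n$, and $i_2(G) = \binom{n}{2} - m$ depend only on $n$ and $m$, the task reduces to simultaneously minimizing $i_j(G)$ for every $j \geq 3$. The crucial translation is that $i_j(G)$ equals the number of $K_j$-subgraphs of the complement $\overline{G}$; so minimizing the higher coefficients of the independence polynomial of $G$ is the same as minimizing clique counts of $\overline{G}$ among all graphs of the prescribed order and complement-size.

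For the first assertion $\overline{G}$ has $\lfloor n^2/4 \rfloor + k$ edges with $1 \leq k \leq \lfloor n/2 \rfloor$. I would invoke Lov\'asz-Simonovits to bound the triangle count of $\overline{G}$ below by $k \lfloor n/2 \rfloor$, with equality realized by the graph $H$ obtained from $K_{\lceil n/2 \rceil, \lfloor n/2 \rfloor}$ by placing $k$ additional edges inside the larger side in a triangle-free fashion. Because $H$ is $K_4$-free, all of its higher clique counts vanish, which is trivially best possible, so $H$ simultaneously minimizes the count of $K_j$ in $\overline{G}$ for every $j \geq 3$. Complementing converts $H$ into $K_{\lceil n/2 \rceil} \cup K_{\lfloor n/2 \rfloor}$ with $k$ edges deleted from the larger clique so that no independent triple appears inside it, matching the description in the statement. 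The claimed coefficient of $x^3$ follows from a direct count: each deleted edge produces an independent pair inside the larger clique, and combining it with any of the $\lfloor n/2 \rfloor$ vertices of the smaller clique yields a distinct independent triple.

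The second assertion proceeds in parallel, with the Fisher-Solow theorem replacing Lov\'asz-Simonovits: the complement has $n = 2a+b$ vertices and $a^2 + 2ab$ edges, Fisher-Solow identifies $K_{a,a,b}$ as the triangle-minimizer, $K_{a,a,b}$ is $K_4$-free, and complementing gives $G = 2K_a \cup K_b$. The stated coefficient of $x^3$ follows by directly enumerating the independent triples in $2K_a \cup K_b$.

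The main obstacle is that Lov\'asz-Simonovits and Fisher-Solow are most naturally stated as bounds within the $K_4$-free subclass, whereas the coefficient-wise comparison calls for a global lower bound on $i_3$ over all of $\mathcal{S}_{n,m}$. For Lov\'asz-Simonovits the stronger unrestricted form is the version actually proved in the original paper. For Fisher-Solow one would argue separately that a competitor $\overline{G'}$ containing a $K_4$ still has at least $a^2 b$ triangles, which at the relevant edge density follows from a Kruskal-Katona-style lower bound on triangle counts in terms of edge counts. Once this global bound is in hand, the $K_4$-free extremals remain globally extremal, coefficient-wise minimality is secured, and optimal-leastness is established.
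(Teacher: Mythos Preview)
Your proposal is correct and follows the same route as the paper: pass to the complement, invoke Lov\'asz--Simonovits (respectively Fisher--Solow) to minimize the triangle count, and observe that the extremal complement is $K_4$-free so all higher coefficients of the independence polynomial vanish, yielding coefficient-wise minimality. You are in fact more careful than the paper's one-line ``by taking complements, we derive'' in explicitly flagging the $K_4$-free hypothesis attached to the Fisher--Solow citation and sketching how to lift it to a global triangle bound.
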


To find sparse families of optimally-least graphs we will look at a graph operation which can be done to decrease the value of the independence polynomial on $[0,\infty)$. 
Let $H_1$ be a graph which consists of an induced subgraph $G_1$, containing an edge $e = vw$, and two other vertices $y$ and $z$ that are isolated, and let $H_2$ be the graph formed from $H_1$ by removing edge $e$ and adding in an edge between $y$ and $z$ (see Figure~\ref{polyinc} -- we set $G_{2} = G_{1} - e$). Clearly $G_1$ and $G_2$ have the same number of vertices and edges.
We will show that this removal of an edge to form a $K_{2}$ can never increase the independence polynomial (on $[0,\infty)$).

\begin{figure}[ht]
\centering
\includegraphics[width=4in]{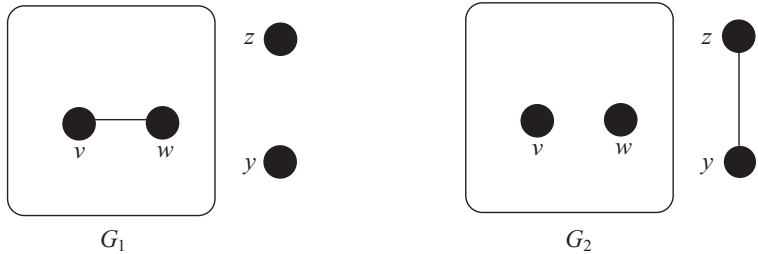}
\caption{Graphs for Lemma~\ref{lelmlem}. $H_1$ is the graph $G_1 \cup \{y,z\}$ (on the left) and $H_2$ is the graph $G_2 \cup \{y,z\}$ (on the right).}
\label{polyinc}
\end{figure}

\begin{lemma} 
\label{lelmlem}
For the graphs in Figure~\ref{polyinc}, we have $I(H_2,x)\leq I(H_1,x)$ on $[0,\infty)$.
\end{lemma}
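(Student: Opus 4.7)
The plan is to compute $I(H_1,x)$ and $I(H_2,x)$ explicitly and reduce the inequality to a comparison of polynomials with non-negative coefficients (which then clearly holds on $[0,\infty)$).

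Since $y,z$ are isolated in $H_1$ we have $I(H_1,x)=I(G_1,x)(1+x)^2$, and since $\{y,z\}$ is a $K_2$ disjoint from $G_2$ in $H_2$ we have $I(H_2,x)=I(G_2,x)(1+2x)$. Next I would relate $I(G_2,x)$ to $I(G_1,x)$ via the standard edge-deletion identity: the independent sets of $G_2=G_1-e$ are exactly those of $G_1$ together with those independent sets of $G_2$ that contain both $v$ and $w$, and the latter are in bijection with independent sets of $R:=G_1-(N[v]\cup N[w])$. Thus
\[
I(G_2,x)=I(G_1,x)+x^2 I(R,x).
\]
Substituting and simplifying,
\[
I(H_1,x)-I(H_2,x)=x^2\bigl[I(G_1,x)-(1+2x)I(R,x)\bigr],
\]
so the lemma reduces to the inequality $I(G_1,x)\geq(1+2x)I(R,x)$ on $[0,\infty)$.

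To prove this last inequality I would classify the independent sets of $G_1$ by their intersection with $\{v,w\}$. Since $vw\in E(G_1)$, no independent set contains both, so applying the recursion (\ref{ipoly}) twice (first at $v$, then at $w$ inside $G_1-v$) yields
\[
I(G_1,x)=I(G_1-v-w,x)+x\,I(G_1-N[v],x)+x\,I(G_1-N[w],x).
\]
Each of the three graphs on the right-hand side contains $R$ as an induced subgraph, so each of $I(G_1-v-w,x)$, $I(G_1-N[v],x)$, and $I(G_1-N[w],x)$ is coefficient-wise at least $I(R,x)$ (the independent sets of $R$ inject into those of any induced supergraph), and hence each is at least $I(R,x)$ for $x\geq 0$. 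Summing gives $I(G_1,x)\geq(1+2x)I(R,x)$, which completes the proof.

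There is no real obstacle beyond keeping the neighborhood bookkeeping straight: the only substantive step is recognizing that once $\{y,z\}$ has been factored out, the whole difference is governed by the three graphs obtained by ``splitting'' $G_1$ at the edge $e=vw$, all of which dominate $R$ in the coefficient-wise sense.
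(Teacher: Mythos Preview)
Your proof is correct. Both you and the paper begin by factoring out the $\{y,z\}$ component to get $I(H_1,x)=(1+x)^2 I(G_1,x)$ and $I(H_2,x)=(1+2x)I(G_2,x)$, and both ultimately rely on the fact that an induced subgraph has coefficient-wise smaller independence polynomial. The routes diverge after that. The paper expands $I(G_1,x)$ and $I(G_2,x)$ via the vertex recursion at $v$ (using $G_1-v=G_2-v$ and $G_1-N[v]=G_2-N[v]-w$), invokes the single inequality $I(G_2-N[v],x)\le (1+x)I(G_2-N[v]-w,x)$, and then pushes through a somewhat lengthy algebraic cancellation to reach $x^2\bigl(I(G_2-v,x)-I(G_2-N[v],x)\bigr)\ge 0$. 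You instead use the edge-deletion identity $I(G_2,x)=I(G_1,x)+x^2 I(R,x)$ with $R=G_1-(N[v]\cup N[w])$, which immediately collapses the difference to $x^2\bigl(I(G_1,x)-(1+2x)I(R,x)\bigr)$; the symmetric three-term decomposition of $I(G_1,x)$ along the edge $vw$ then finishes things cleanly, since each of $G_1-v-w$, $G_1-N[v]$, $G_1-N[w]$ contains $R$ as an induced subgraph. Your version is tidier and more transparently symmetric in $v$ and $w$; the paper's version stays closer to the single-vertex recursion (\ref{ipoly}) already introduced, at the cost of heavier bookkeeping.
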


\begin{proof}
Note that by Equation (\ref{ipoly}), our deletion-contraction formula for independence polynomials, we have
\begin{eqnarray*}
I(H_1,x)& = & (1+2x+x^2)I(G_1,x)\\
    & = & (1+2x+x^2)(I(G_1-v,x)+xI(G_1-N[v],x))\\
    &=& (1+2x+x^2)(I(G_2-v,x)+xI(G_2-N[v]-w,x))
\end{eqnarray*}
 and
\begin{eqnarray*}
I(H_2,x)& = &(1+2x)(I(G_2-v,x)+xI(G_2-N[v],x)).
\end{eqnarray*}
We also find that 
\begin{eqnarray}\label{inequal} I(G_2-N[v],x)\leq (1+x)I(G_2-N[v]-w,x)  \end{eqnarray} since $G_2-N[v]$ is a subgraph of $(G_2-N[v]-w) \cup K_{1}$.

Consider $F(x)=I(H_1,x)-I(H_2,x)$. Using our expression for $i(H_{1},x)$ and $i(H_{2},x)$ and the inequality (\ref{inequal}), we can see that
\begin{eqnarray*}
F(x) & = & (1+2x)I(G_2-v,x)+x^2I(G_2-v,x)+\\
      && x(1+2x+x^2)\ipoly{G_2-N[v]-w}{x}-(1+2x)\ipoly{G_2-v}{x}\\
       &&-x(1+2x)\ipoly{G_2-N[v]}{x}\\
     &=& x^2\ipoly{G_2-v}{x}+x(1+x)\ipoly{G_2-N[v]-w}{x}\\
      &&+x^2(1+x)\ipoly{G_2-N[v]-w}{x}-x(1+2x)\ipoly{G_2-[v]}{x}\\
     &\geq & x^2\ipoly{G_2-v}{x}+x\ipoly{G_2-N[v]}{x}\\
     &&+x^2\ipoly{G_2-N[v]}{x}-x\ipoly{G_2-[v]}{x}-2x^2\ipoly{G_2-N[v]}{x}\\
     &=& x^2\ipoly{G_2-v}{x}-x^2\ipoly{G_2-N[v]}{x}.
\end{eqnarray*}     

Since $G_2-N[v]$ is a subgraph of $G_2-v$, we have $\ipoly{G_2-v}{x}\geq \ipoly{G_2-N[v]}{x}$. It follows that $F(x)\geq 0$, so $\ipoly{H_1}{x}\geq \ipoly{H_2}{x}$.
\newqed 
\end{proof}

It follows that if $m\leq n/2$, by pulling out $K_2$'s, we derive:

\begin{theorem}
\label{lessthanhalf}
For a given $n\geq 2$ and $m \leq \frac{n}{2} $, the optimally-least graph for the independence polynomial for $x \geq 0$ is $mK_2 \cup (n-2m)K_1 $. \newqed
\end{theorem}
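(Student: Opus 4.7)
The plan is to iterate Lemma~\ref{lelmlem}, which lets us relocate an edge onto two isolated vertices at the cost of only decreasing $I(\cdot,x)$ on $[0,\infty)$. I would induct on the quantity $m - i(G)$, where $i(G)$ denotes the number of connected components of $G$ isomorphic to $K_2$. The base $i(G) = m$ forces every edge of $G$ to be isolated, so $G = mK_2 \cup (n-2m)K_1$ and there is nothing to prove.

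For the inductive step, assume $i(G) < m$, so $G$ has a component with at least three vertices and hence a vertex of degree $\ge 2$. A standard degree-sum count then gives at least $n - 2m + 1$ isolated vertices in $G$. In the generic case where this count is at least $2$, I would choose two isolated vertices $y,z$ together with any edge $e$ lying in a non-$K_2$ component, set $G_1 = G - \{y,z\}$, and apply Lemma~\ref{lelmlem}. The resulting graph $G'$ satisfies $I(G,x) \ge I(G',x)$, and $i(G') \ge i(G) + 1$ because $yz$ is an isolated $K_2$-component of $G'$ while removing $e$ cannot have destroyed any existing isolated $K_2$. The inductive hypothesis then finishes this sub-case.

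The main obstacle is the boundary situation $m = n/2$ with exactly one isolated vertex, where the Lemma is not directly applicable. I would argue that the non-isolated degree sequence is then forced: writing it as $d_1 \ge \cdots \ge d_{2m-1} \ge 1$ with sum $2m$, having any $d_i \ge 2$ leaves no room for a second such entry, so the sequence must be $(2,1,\dots,1)$. Since the unique vertex of degree $2$ has both neighbors of degree $1$, its component is a $P_3$ and the remaining degree-$1$ vertices pair off into $K_2$'s, yielding $G = P_3 \cup (m-2)K_2 \cup K_1$. For this single exceptional graph I would verify the inequality directly, noting
\[
I(G,x) - I(mK_2,x) = (1+2x)^{m-2}\bigl[(1+3x+x^2)(1+x) - (1+2x)^2\bigr] = (1+2x)^{m-2}\,x^3,
\]
which is manifestly non-negative on $[0,\infty)$. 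The remaining case of zero isolated vertices similarly forces $G = mK_2$ itself and is vacuous.
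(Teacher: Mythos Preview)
Your argument is correct and follows the same route as the paper, namely repeated application of Lemma~\ref{lelmlem} to ``pull out $K_2$'s.'' The paper's own proof is the one-line remark ``by pulling out $K_2$'s, we derive\ldots'' and does not spell out the details; in particular it glosses over the boundary situation $m=n/2$ with exactly one isolated vertex, where Lemma~\ref{lelmlem} (which needs two isolated vertices) is not directly applicable. Your degree-sequence analysis correctly identifies the unique obstruction there as $P_3\cup(m-2)K_2\cup K_1$, and your direct computation $I(P_3\cup K_1,x)-I(2K_2,x)=x^3$ cleanly disposes of it. The final sentence about ``zero isolated vertices'' is redundant (your bound $s\ge n-2m+1\ge 1$ already rules it out in the inductive step), but harmless.
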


\section{Optimality for $k$-Independence Polynomials}

We now look at the optimality of $k$-independence polynomials and find the situation is much different for $k \geq 3$ than for $k = 2$.  We will show that in contrast, to $k=2$,  for all $k \geq 3$, there {\em does not} always exist optimally-greatest nor optimally-least graphs for the $k$-independence polynomial. 

Before we do so, we make the following observation, which shows that the nonexistence of optimal graphs can sometimes be derived by considering only certain coefficients of the polynomials.

\begin{observation}
\label{max}
 Suppose that $G$ and $H$ be graphs on $n$ vertices and $m$ edges and let $k\geq 2$, with
\[ {\rm I}_{k}(G,x)=\sum_{j=0}^ni_j(G)x^{j} \] and
\[ {\rm I}_{k}(H,x)=\sum_{j=0}^ni_j(H)x^{j} \]
Then 
\begin{itemize}
  \item if $i_j(G)=i_j(H)$ for $j<\ell$ but $i_{\ell}(G)>i_{\ell}(H)$, then {\rm I}$_{k}(G,x)>${\rm I}$_{k}(H,x)$ for $x$ arbitrarily small and 
  \item if $i_j(G)=i_j(H)$ for $t>j$ but $i_{t}(G)>i_{t}(H)$, then {\rm I}$_{k}(G,x)>${\rm I}$_{k}(H,x)$ for $x$ arbitrarily large.
\end{itemize}
\end{observation}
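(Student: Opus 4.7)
The plan is to analyze the difference polynomial $D(x) = I_k(G,x) - I_k(H,x)$, whose coefficients are exactly $d_j = i_j(G) - i_j(H)$. Both assertions reduce to elementary facts about the sign of a real polynomial near $0$ and near $\infty$, so the work is essentially bookkeeping rather than any genuine obstacle.

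For the first bullet, the hypothesis says $d_j = 0$ for $j < \ell$ and $d_\ell > 0$. I would factor
\[
D(x) = x^{\ell}\bigl(d_\ell + d_{\ell+1}x + d_{\ell+2}x^2 + \cdots\bigr),
\]
and observe that the bracketed factor is continuous at $x=0$ with value $d_\ell > 0$. Hence there exists $\delta > 0$ such that the bracketed factor remains strictly positive on $(0,\delta)$, and therefore $D(x) > 0$ on $(0,\delta)$, which is the precise meaning of ``$I_k(G,x) > I_k(H,x)$ for $x$ arbitrarily small.''

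For the second bullet (reading the statement as ``$i_j(G) = i_j(H)$ for $j > t$ but $i_t(G) > i_t(H)$,'' which is what makes the conclusion about large $x$ meaningful), the argument is symmetric: $D(x)$ is now a polynomial of degree at most $t$ with leading coefficient $d_t > 0$, so
\[
\frac{D(x)}{x^{t}} \;\longrightarrow\; d_t \;>\; 0 \qquad \text{as } x \to \infty,
\]
and hence $D(x) > 0$ for all sufficiently large $x$, giving $I_k(G,x) > I_k(H,x)$ for $x$ arbitrarily large.

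There is no real obstacle to either step; the only thing to be careful about is that the strict inequalities $d_\ell > 0$ and $d_t > 0$ are preserved under small perturbations (continuity near $0$) and under taking the leading-order limit (behavior at $\infty$), which is exactly what the two factorizations make transparent. I would then conclude by noting that this observation is what we will invoke in the sequel: to rule out an optimally-greatest (respectively optimally-least) graph it suffices to exhibit two graphs $G,H \in \mathcal{S}_{n,m}$ whose first differing coefficient (from the top, respectively from the bottom) goes the ``wrong'' way for either candidate.
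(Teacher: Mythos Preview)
Your argument is correct and is exactly the standard justification one would give. The paper itself does not supply a proof of this observation; it is stated without argument and treated as self-evident, so there is nothing to compare against beyond noting that your factorization of $D(x)=I_k(G,x)-I_k(H,x)$ by its lowest (respectively highest) nonvanishing power of $x$ is the natural way to make the observation rigorous.
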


Note that $i_{j} = {n \choose j}$ for $j < k$. For a graph $G$, let $r_{G} = r^{k}_{G}$ denote the largest value of $j$ such that there exists an induced subgraph of $G$ of order $j$ that does not contains a $k$-clique, that is, $r_{G}$ is the largest value of $j$ such that $i_j(G) > 0$.

Thus to show that for $k$-independence polynomials ($k \geq 3$) there does not always exist optimally-greatest graphs, we will show that for some $n$ and $m = m(n)$, there is a unique graph $G \in \mathcal{S}_{n,m}$ with the largest value of $r_{G}$ in $\mathcal{S}_{n,m}$, thus optimally-greatest for sufficiently large values of $x$, but there is another graph $H \in \mathcal{S}_{n,m}$ with more $k$-independent sets than $G$, and hence optimally-greatest the $k$-independence polynomial for arbitrarily small values of $x\geq 0$.

\begin{theorem}
For $k \geq 3$ and $l \geq 2$, and any $n > (k-1)l(l-1)$, there does not exist an  optimally-greatest graph for the $k$-independence polynomial of order $n$ and size $m = {n \choose 2}-(k-1){l \choose 2}$.
\end{theorem}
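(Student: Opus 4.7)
The plan is to apply Observation 2 by producing two non-isomorphic graphs $G_1, H \in \mathcal{S}_{n,m}$ that are optimally-greatest on complementary ranges of $x$, so no graph can pointwise dominate both on $[0,\infty)$.

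First I would identify a graph uniquely achieving the maximum value of $r_G$. Let $G_1 \in \mathcal{S}_{n,m}$ be the graph whose complement is $(k-1)K_l \cup (n-(k-1)l)K_1$; this lies in $\mathcal{S}_{n,m}$ because the complement has exactly $(k-1)\binom{l}{2}$ edges. For any $G \in \mathcal{S}_{n,m}$ and any $S \subseteq V(G)$ of size $p$ with $G[S]$ being $K_k$-free, Tur\'an's theorem gives $|E(G[S])| \leq t(p, k-1)$, so $|E(\bar{G}[S])| \geq \binom{p}{2} - t(p, k-1)$. Combined with $|E(\bar{G})| = (k-1)\binom{l}{2}$, this forces $p \leq (k-1)l$. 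In the equality case the Tur\'an extremal graph on $(k-1)l$ vertices is the balanced $(k-1)$-partite $K_{l,\ldots,l}$, so $p = (k-1)l$ requires $\bar{G}[S] = (k-1)K_l$, which (together with the edge-count of $\bar{G}$) pins down $\bar{G}$ up to isomorphism. Hence $G_1$ is the unique graph in $\mathcal{S}_{n,m}$ with $r_{G_1} = (k-1)l$, and every other graph has strictly smaller $r$.

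Next I would construct a challenger $H$ with $i_k(H) > i_k(G_1)$. For $l \geq 3$ take $\bar{H} = rK_2 \cup (n-2r)K_1$ with $r = (k-1)\binom{l}{2}$; the hypothesis $n > (k-1)l(l-1) = 2r$ ensures at least one isolated vertex and $H \neq G_1$. Since $i_j(G) = \binom{n}{j}$ for $j < k$, the graphs first differ at index $k$, and because $i_k(G) = \binom{n}{k} - N(\bar{G},k)$, where $N(\bar{G},k)$ counts $k$-independent sets in $\bar{G}$, the claim reduces to $N(\bar{H}, k) < N(\bar{G_1}, k)$. Decomposing each count by intersection with the non-isolated vertices gives
\[
N(\bar{G_1}, k) = \sum_{j=0}^{k-1} \binom{k-1}{j} l^j \binom{n-(k-1)l}{k-j},
\]
\[
N(\bar{H}, k) = \sum_{j=0}^{k} \binom{r}{j} 2^j \binom{n-2r}{k-j}.
\]
Since $2r - (k-1)l = (k-1)l(l-2) > 0$ for $l \geq 3$, the leading $\binom{n-(k-1)l}{k}$ term of $N(\bar{G_1}, k)$ exceeds the $\binom{n-2r}{k}$ term of $N(\bar{H}, k)$; a term-by-term comparison (equivalently, inclusion--exclusion over edges of the two complements, noting that the matching $\bar{H}$ contributes no vertex-sharing edge pairs and no triangles while $(k-1)K_l$ does) yields the strict inequality throughout $n > 2r$.

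Finally, Observation 2 closes the argument: the second bullet, combined with $r_{G_1} = (k-1)l > 2(k-1) \geq r_H$ (the matching $\bar{H}$ cannot sustain an induced subgraph with independence number $\leq k-1$ on more than $2(k-1)$ vertices, achieved by including both endpoints of $k-1$ of its edges), gives $I_k(G_1, x) > I_k(H, x)$ for all sufficiently large $x$; the first bullet, applied at $\ell = k$, gives $I_k(H, x) > I_k(G_1, x)$ for all sufficiently small $x > 0$; and an optimally-greatest graph would have to dominate both $G_1$ and $H$ pointwise on $[0,\infty)$, a contradiction. The main obstacle I anticipate is the rigorous verification of $N(\bar{H}, k) < N(\bar{G_1}, k)$ uniformly in $n$ rather than merely asymptotically; a secondary issue is the degenerate case $l = 2$, where $\bar{H}$ coincides with $\bar{G_1}$ and the above matching challenger collapses, so a different construction (for instance, obtained by replacing one $K_2$ of $\bar{G_1}$ by a $P_3$ using the extra isolated vertex supplied by $n > 2(k-1)$) must be supplied and Step 2 re-verified in that case.
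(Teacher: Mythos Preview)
Your overall architecture is exactly the paper's: the same graph $G_1$ (the paper's $G$), the same Tur\'an uniqueness argument for $r_{G_1}$, and the same challenger $H$ whose complement is a perfect matching on $2r$ vertices with $r=(k-1)\binom{l}{2}$.  So the comparison reduces to the single inequality you flag.

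That flagged step is a genuine gap.  Your ``term-by-term comparison'' cannot work as stated: the two sums
\[
N(\bar G_1,k)=\sum_{j=0}^{k-1}\binom{k-1}{j}l^j\binom{n-(k-1)l}{k-j},\qquad
N(\bar H,k)=\sum_{j=0}^{k}\binom{r}{j}2^j\binom{n-2r}{k-j}
\]
have different ranges, different binomial arguments, and the same leading behaviour in $n$, so neither a termwise nor a leading-order argument settles the strict inequality for all $n>2r$.  The inclusion--exclusion heuristic you mention (``$\bar H$ has no vertex-sharing edge pairs while $(k-1)K_l$ does'') points in the right direction but is not a proof.  The paper avoids this entirely by passing to cliques in $G$ and $H$ rather than independent sets in the complements, and then exploiting the join structure: writing $G=G_1'^{+(k-1)}+K_{n-(k-1)l(l-1)}$ and $H=G_2'^{+(k-1)}+K_{n-(k-1)l(l-1)}$ with $G_1'=\overline{K_l\cup l(l-2)K_1}$ and $G_2'=\overline{\binom{l}{2}K_2}$ on the same vertex set, the clique polynomial of a join factors, so it suffices to show $G_1'$ has at least as many $i$-cliques as $G_2'$ for every $i$, strictly for $i\ge 3$.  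That reduces to a one-variable ratio
\[
f_{l,i}=\frac{\binom{l(l-2)}{i}+l\binom{l(l-2)}{i-1}}{\binom{\binom{l}{2}}{i}2^{i}},
\]
and a short calculation shows $f_{l,i+1}/f_{l,i}>1$ for $i\ge 2$, giving the strict inequality independently of $n$.  This factorisation is the missing idea in your argument.

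Your observation about $l=2$ is correct, and in fact it exposes the same issue in the paper's own proof: when $l=2$ one has $r=k-1$ and $\bar H=(k-1)K_2\cup(n-2(k-1))K_1=\bar G_1$, so the challenger coincides with $G_1$ and no contradiction is produced.  However, your proposed repair does not work as written: replacing one $K_2$ of $\bar G_1$ by a $P_3$ ``using the extra isolated vertex'' turns three vertices and one edge into three vertices and two edges, so the resulting graph has $k$ non-edges rather than $k-1$ and leaves $\mathcal S_{n,m}$.  A repair that stays in $\mathcal S_{n,m}$ must preserve the edge count of the complement; for instance, replacing two disjoint $K_2$'s by $P_3\cup K_1$ does so, but you would then still need to verify that this new $H'$ has strictly fewer $k$-cliques than $G_1$.
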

\begin{proof}
We recall an old well known result by Turan (see \cite{bollobas2}, for example) that states that the unique graph $T_{n,k}$ of order $n$ with the maximum number $m_{n,k}$ of edges in a graph of order $n$ without a $k$-clique is the complete $(k-1)$-partite graph with cells of order $\lfloor n/(k-1) \rfloor$ or $\lceil n/(k-1) \rceil$. For fixed $n > (k-1)l(l-1)$, we set $m = {n \choose 2}-(k-1){l \choose 2}$ and consider the class $\mathcal{S}_{n,m}$. We define the graph $G = G(n,m)$ as the join $T_{(k-1)l,m} + K_{n-(k-1)l}$ of the Turan graph $T_{(k-1)l,m}$ with $K_{n-(k-1)l}$, that is, $G$ is formed from the disjoint union of $T_{(k-1)l,k}$ with $K_{n-(k-1)l}$ by adding in all edges between them (equivalently, the complement of $G$, $\overline{G}$, consists of $k-1$ cliques of order $l$, together with $n-(k-1)l$ isolated vertices). A quick calculation shows that $G$ has $m$ edges, and hence belongs to $\mathcal{S}_{n,m}$.

We claim first that among all graphs $F^{\prime}$ in $\mathcal{S}_{n,m}$, $G$ has the largest value of $r_{F^{\prime}}$. Note that $r_{G} = (k-1)l$ as the Turan graph $T_{(k-1)l,k}$ has no $k$-clique. Now if there is a graph $F$ in $\mathcal{S}_{n,m}$ with $r_{F} > r_{G}$, then $F$ has an induced subgraph $S$ on say $s > (k-1)l$ vertices with no $k$-clique, and hence $S$ has at most as many edges as $T_{s,m}$. However, then $\overline{F}$ has at least as many edges as $\overline{T_{s,m}}$, which is strictly more than the number of edges in $\overline{T_{(k-1)l,m}}$ (to see this, think of the complements of Turan graphs being the disjoint unions of cliques, and observe that for $t > (k-1)l$, one can form $\overline{T_{t,m}}$ from $\overline{T_{(k-1)l,m}}$
 by successively adding vertices to the cliques to keep them as nearly equal as possible). Thus $\overline{F}$ would contain more edges than $\overline{G}$, the disjoint union of $\overline{T_{(k-1)l,m}}$ and isolated vertices, and hence $F$ would fewer edges than $G$, a contradiction as both $F$ and $G$ have the same number of vertices and edges. We conclude no such $F$ exists, so $G$ has the maximal $r_F$ value. Moreover, by the argument given, if $G^{\prime}$ were a graph in $\mathcal{S}_{n,m}$ with $r_{G^{\prime}} = r_{G}$, then if $S$ is any induced subgraph of $G^{\prime}$ of size $r_{G}=(k-1)l$ that has no $k$-clique, $\overline{S}$ must have precisely as many edges as $\overline{T_{(k-1)l,k}}$, which is the number of edges of $\overline{G}$. We conclude that, from Turan's Theorem, that $G^\prime$ is isomorphic to $G$, which is therefore the unique graph in $\mathcal{S}_{n,m}$ with the largest $r_F$-value, and hence the unique graph optimally-greatest for the $k$-independence polynomial for $x$ sufficiently large.

So if there is an optimally-greatest graph the $k$-independence polynomial for $\mathcal{S}_{n,m}$, it must be $G$. We note that if a graph  of order $n$ and size $m$ has a minimum number of $k$-cliques, then it has a maximum number of $k$-independent sets of order $k$. We will show now that there is another graph $H \in \mathcal{S}_{n,m}$ with fewer $k$-cliques than $G$, and hence more $k$-independent sets of size $k$, and so $G$ cannot be optimally-greatest the $k$-independence polynomial as $I_{k}(H,x) > I_{k}(G,x)$ for $x> 0 $ sufficiently small. 

Let $H$ be the graph of order $n$ such that $\overline{H}$ is the disjoint union of $(k-1){l \choose 2}$ $K_{2}$'s and isolated vertices (as $n > (k-1)l(l-1) = 2(k-1){l \choose 2}$, we can find such a graph of order $n$). We can think of $\overline{H}$ as splitting the edges of the $l$-cliques of $\overline{G}$ into edges. 
Now it is easy to see that for graphs $G_{1}$ and $G_{2}$ having $k_{1,i}$ and $k_{2,i}$ cliques of order $i$, respectively, then for any positive integer $t$, the number of $k$-cliques of order $t$ in $G_{1}+G_{2}$ is 
\[ \sum_{i+j = t} k_{1,i}k_{2,j}.\]
It follows that it suffices to show that the following graph
$G_{1} = \overline{K_{l}+l(l-2)K_{1}}$
has, for all $i$, at least as many $i$-cliques as the graph $G_{2} = \overline{{l \choose 2}K_{2}}$, and that for $i \geq 3$ the former has strictly more $i$-cliques than the latter. For $i = 1 $ and $i = 2$ they have the same number of $i$-cliques (as they have the same number of vertices and edges). For $i \geq 2$, the number of $i$-cliques in $G_{1}$ is 
\[ {l(l-2) \choose i} + l {l(l-2) \choose {i-1}}\]
while $G_{2}$ has 
\[ {{l \choose 2} \choose i}2^{i}\]
many $i$-cliques.
We set
\[ f_{l,i} = \frac{{l(l-2) \choose i} + l {l(l-2) \choose {i-1}}}{{{l \choose 2} \choose i}2^{i}}.\]
Clearly $f_{l,2} = 1$ as both $G_{1}$ and $G_{2}$ has the same number of edges. We'll show that the sequence is strictly increasing, and hence for $i \geq 3$, always greater than $1$, so that $G_{1}$ has strictly more $i$-cliques than $G_{2}$, concluding the proof.

Now via some straightforward but tedious calculations, we find that for $i \geq 2$,
\[ \frac{f_{l,i+1}}{f_{l,i}} =  \frac{(l(l-2)-i+l(i+1))(l(l-2)-i+1)}{(l(l-1)-2i)(l(l-2)-i+1+li)}.\]
It follows that 
\[ \frac{f_{l,i+1}}{f_{l,i}} > 1 \]
iff 
\begin{eqnarray*} 
(l(l-2)-i+l(i+1))(l(l-2)-i+1) & > & (l(l-1)-2i)(l(l-2)-i+1+li),
\end{eqnarray*} which holds iff 
\[ i(l-1)(i-1) > 0.\]
The latter is true as  $i, l \geq 2$.

Thus we conclude that $f_{l,i} > 1$ for all $i \geq 2$. Thus $H$ has fewer $k$-cliques than $G$, hence more $k$-independent sets of order $k$ and so an optimally-greatest graph the $k$-independence polynomial does not exist.
\newqed 
\end{proof}

\vspace{0.25in}

We turn finally to the issue of optimally-least $k$-independence polynomials.
From Observation~\ref{max}, we have that if an optimally-least graph the $k$-independence polynomial exists, then it must have the least number of $k$-independent sets of order $k$ (or equivalently, for the optimally-least graph the $k$-independence polynomial has the maximum number of $k$-cliques), since a graph with the latter will be optimally-least the $k$-independence polynomial for sufficiently small values of $x\geq 0$. In \cite{bollobas2} it was shown that for a graph on $n$ vertices and $m={d \choose 2}+r$ edges ($0 \leq r < d$) and for $k \geq 3$, the maximum number of cliques of size $k$ is ${d \choose k}+{r \choose k-1}$, and a graph which achieves such bounds consists of a $K_d$, a vertex $x$ with $N(x)\subseteq V(K_d)$, and $n-d-1$ isolated vertices (see Figure~\ref{optkleastnear0}). (This graph is not the unique extremal graph for values of $r<k-1$, since the addition of fewer than $k-1$ edges will not produce another $k$-clique, but for values of $r\geq k-1$, this graph is the unique extremal graph, since to obtain the maximum number of $k$-cliques, the edges will have to be added to the same vertex.)  Such graphs are candidates for optimally-least graphs the $k$-independence polynomial, but we will show that for $k\geq 3$, that optimally-least graphs the $k$-independence polynomial do not always exist. 

\begin{figure}[ht]
\centering
\includegraphics[width=1.8in]{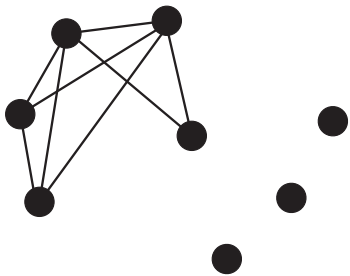}
\caption{A graph that is optimally-least for the $k$-independence polynomial near $0$.}
\label{optkleastnear0}
\end{figure}

\begin{theorem}
For $k\geq 3$, $n \geq {k\choose 2} + k  + 1$ vertices and $m={{k\choose 2}+2\choose 2}-1$ optimally-least graphs the $k$-independence polynomial do not exist.
\end{theorem}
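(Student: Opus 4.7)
The plan is to exhibit two graphs in $\mathcal{S}_{n,m}$ whose $k$-independence polynomials cross on $[0,\infty)$, with one forced to win near $0$ and the other forced to win near $\infty$, so that no single graph can be below everything else. Set $d = \binom{k}{2}+2$, so that $m = \binom{d}{2}-1 = \binom{d-1}{2}+(d-2)$. Applying the Bollob\'as maximum-$k$-clique result recalled just before the theorem, with parameters $d-1$ and $r = d-2 = \binom{k}{2}$ (and noting $r \geq k-1$ because $k \geq 3$), the \emph{unique} graph in $\mathcal{S}_{n,m}$ with the most $k$-cliques is
\[ G^{*} \;=\; (K_{d}-e)\,\cup\,(n-d)K_{1}, \]
i.e.\ a $K_{d-1}$ together with one extra vertex adjacent to all but one of its vertices, plus $n-d$ isolated vertices. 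By Observation~\ref{max}, any optimally-least graph must simultaneously minimize $i_{k}$, so $G^{*}$ is the only candidate.

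Next I would introduce the rival
\[ G' \;=\; K_{\binom{k}{2}+1}\,\cup\,K_{k}\,\cup\,\bigl(n-\tbinom{k}{2}-k-1\bigr)K_{1}. \]
A short binomial check gives $|E(G')| = \binom{\binom{k}{2}+1}{2}+\binom{k}{2} = m$, and the hypothesis $n \geq \binom{k}{2}+k+1$ is exactly what makes the isolated part non-negative, so $G' \in \mathcal{S}_{n,m}$. Since any clique $K_{t}$ with $t \geq k$ has largest $K_{k}$-free induced subgraph of order exactly $k-1$, we obtain
\[ r_{G'} \;=\; (k-1)+(k-1)+\bigl(n-\tbinom{k}{2}-k-1\bigr) \;=\; n-\tbinom{k}{2}+k-3. \]
For $G^{*}$, the $K_{d}-e$ part admits a $K_{k}$-free induced subgraph of order $k$ (take the two endpoints of the missing edge together with any $k-2$ other vertices) but no larger one, because any $k+1$ vertices of $K_{d}-e$ necessarily contain a $K_{k}$; this gives $r_{G^{*}} = (n-d)+k = r_{G'}+1$.

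To conclude, I would apply both bullets of Observation~\ref{max}. The top coefficients satisfy $i_{r_{G^{*}}}(G^{*}) > 0 = i_{r_{G^{*}}}(G')$, so the second bullet yields $I_{k}(G^{*},x) > I_{k}(G',x)$ for $x$ arbitrarily large, showing $G^{*}$ is not optimally-least. On the other hand, since $G^{*}$ is the \emph{unique} minimizer of $i_{k}$, every other $H \in \mathcal{S}_{n,m}$ satisfies $i_{k}(H) > i_{k}(G^{*})$, and the first bullet then gives $I_{k}(H,x) > I_{k}(G^{*},x)$ for $x$ arbitrarily small, so no such $H$ can be optimally-least either; hence no optimally-least graph exists. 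The delicate step is the uniqueness of $G^{*}$, which ultimately rests on the inequality $r = \binom{k}{2} \geq k-1$ supplied by $k \geq 3$; once this is secured, the remainder is bookkeeping on binomial identities and on $K_{k}$-free subgraphs of disjoint unions of cliques.
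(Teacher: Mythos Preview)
Your proof is correct and follows essentially the same route as the paper: the same two graphs $G^{*}=(K_{\binom{k}{2}+2}-e)\cup(n-\binom{k}{2}-2)K_{1}$ and $G'=K_{\binom{k}{2}+1}\cup K_{k}\cup(n-\binom{k}{2}-k-1)K_{1}$, the Bollob\'as extremal result to pin down behaviour near $0$, and a comparison of $r_{G^{*}}$ and $r_{G'}$ for behaviour at infinity. If anything, your version is slightly tighter in spelling out why the \emph{uniqueness} of $G^{*}$ (via $r=\binom{k}{2}\ge k-1$) is what forces every other candidate to fail near $0$.
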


\begin{proof}
Let $k\geq 3$. Let $G=\left( K_{{k\choose 2}+2}-e \right) \cup (n-{k \choose 2}-2)K_1$, for some edge $e$ of $K_{{k\choose 2}+2}$, and let $H=K_{{k\choose 2}+1}\cup K_k\cup (n-{k \choose 2}-1-k)K_1$. We know that $G$ is optimally-least for the $k$-independence polynomial, when $x$ is sufficiently close to $0$, and $H$ is not. We will now show that $H$ is optimally-least for the $k$-independence polynomial for arbitrarily large values of $x$.

In $G$, the largest size of a vertex set that does not contain a $K_k$ is $n-{k \choose 2}+k-2$, taking any of the $n-{k\choose 2}-2$ isolated vertices, end points of edge $e$ and $k-2$ vertices from $K_{{k\choose 2}+2}$. In $H$, the largest size of a vertex set that does not contain a $K_k$ is size $n-{k \choose 2}+k-3$, taking any of the isolated $n-{k \choose 2}-1-k$ vertices and $k-1$ vertices from each complete graph. Thus by Observation~\ref{max}, $H$ is optimally-least the $k$-independence polynomial for arbitrarily large values of $x$, and $G$ is not, so no optimally-least graphs exist for the $k$-independence polynomial for these $n$ and $m$.
\newqed 
\end{proof}


\section{Conclusion}

While we have seen that optimally-greatest graphs exist for independence polynomials, we have only be able to prove the existence of optimally-least polynomials for a restricted collection of $n$ and $m$. Our belief is that such graphs always exist, but there does not seem to be any reasonable family to put forward as extremal.

In terms of optimality for $k$-independence polynomials, we have seen that for all $k \geq 3$, there are infinitely many values of $n$ and $m$ such that optimally-greatest graphs do not exists, and similarly for optimally-least graphs. 

A full characterization of when optimal graphs for the $k$-independence polynomial exist (for $k\geq 3$, and even for optimally-least for $k=2$) remains open.

\vspace{0.25in}
\noindent {\bf Acknowledgements}
\vspace{0.1in}

\noindent J.I. Brown acknowledges support from NSERC (grant application RGPIN 170450-2013). D. Cox acknowledges research support from NSERC (grant application RGPIN 2017-04401) and Mount Saint Vincent University.

\bibliography{k_indp_poly_oct2_arxiv.bbl}
 
 \end{document}